\newtheorem{thm}{Theorem}
\newtheorem{lem}[thm]{Lemma}
\newtheorem{cor}[thm]{Corollary}
\newtheorem{prop}[thm]{Proposition}
\theoremstyle{remark}
\newtheorem{rmk}[thm]{Remark}
\newcommand{\cv}{\mathbb{C}}
\newcommand{\aut}{\textup{Aut}}
\numberwithin{thm}{section}
\numberwithin{equation}{section}
\begin{document}

\title{The Burns-Krantz rigidity with an interior fixed point}

\author{Feng Rong}

\address{Department of Mathematics, School of Mathematical Sciences, Shanghai Jiao Tong University, 800 Dong Chuan Road, Shanghai, 200240, P.R. China}
\email{frong@sjtu.edu.cn}

\subjclass[2020]{32H99, 32A40}

\keywords{Burns-Krantz rigidity, complex geodesic}

\thanks{The author is partially supported by the National Natural Science Foundation of China (Grant No. 12271350).}

\begin{abstract}
We prove a Burns-Krantz type boundary rigidity near strongly pseudoconvex points for holomorphic self-maps with an interior fixed point. This confirms a conjecture of Huang.
\end{abstract}

\maketitle

\section{Introduction}

In \cite{BK}, Burns and Krantz established the following fundamental rigidity result.

\begin{thm}\cite{BK}
Let $\Omega$ be a smoothly bounded strongly pseudoconvex domain in $\cv^n$, $n\ge 1$, $p\in \partial \Omega$, and $F$ be a holomorphic self-map of $\Omega$. If $F(z)=z+o(|z-p|^3)$ as $z\rightarrow p$, then $F(z)\equiv z$.
\end{thm}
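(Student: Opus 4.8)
The plan is to settle the one-dimensional case $\Omega=\mathbb{D}$ first and then reduce the general $n$-dimensional statement to it by slicing $\Omega$ with complex geodesics landing at $p$.

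\emph{The disk case.} Suppose $f\colon\mathbb{D}\to\mathbb{D}$ is holomorphic with $f(\zeta)=\zeta+o(|\zeta-1|^3)$ as $\zeta\to1$. I would push everything to the right half-plane $H=\{\Re w>0\}$ through the Cayley map $C(\zeta)=\frac{1+\zeta}{1-\zeta}$, which sends $1$ to $\infty$, and set $g=C\circ f\circ C^{-1}$. Writing $\zeta=C^{-1}(w)$ one has $1-\zeta=2/(w+1)$ and $C'(\zeta)\sim w^2/2$, so a one-line computation turns the cubic tangency into $g(w)=w+o(1/|w|)$ as $w\to\infty$. Since $g$ is a self-map of $H$ fixing $\infty$ with $g(w)/w\to1$, its angular derivative at $\infty$ is $1$; Julia's lemma then says $g$ preserves every horocycle $\{\Re w>c\}$, i.e.\ $\Re g(w)\ge\Re w$. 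Hence $G:=g-\mathrm{id}$ has nonnegative real part on $H$. If $G\not\equiv0$ the open mapping theorem gives $G(H)\subseteq H$, so $1/G$ is again a self-map of $H$; but $|1/G(w)|=1/|G(w)|$ grows faster than $|w|$ because $G(w)=o(1/|w|)$, contradicting the at-most-linear growth of half-plane self-maps (equivalently their Herglotz representation). Therefore $G\equiv0$ and $f=\mathrm{id}$.

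\emph{Reduction to the disk.} For $n\ge2$ I would use Lempert's theory. Strong pseudoconvexity (locally convexifiable near $p$, with the attendant boundary regularity) provides complex geodesics $\varphi\colon\mathbb{D}\to\Omega$ landing at $p$, each extending smoothly up to $\partial\mathbb{D}$ with $\varphi(1)=p$, hitting $\partial\Omega$ transversally there, and carrying a holomorphic left inverse $\tilde\varphi\colon\Omega\to\mathbb{D}$ with $\tilde\varphi\circ\varphi=\mathrm{id}_{\mathbb{D}}$, also smooth up to the boundary near $p$. For each such geodesic set $f=\tilde\varphi\circ F\circ\varphi$, a self-map of $\mathbb{D}$. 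Because $\varphi$ is smooth and transversal, $|\varphi(\zeta)-p|\asymp|\zeta-1|$, so $F(\varphi(\zeta))=\varphi(\zeta)+o(|\zeta-1|^3)$; feeding this into the $C^1$ map $\tilde\varphi$ and using $\tilde\varphi\circ\varphi=\mathrm{id}$ gives $f(\zeta)=\zeta+o(|\zeta-1|^3)$. The disk case forces $f=\mathrm{id}$. A Kobayashi-distance squeeze (equality must hold in $k_{\mathbb{D}}\ge k_\Omega(F\varphi,F\varphi)\ge k_\Omega(\varphi,\varphi)=k_{\mathbb{D}}$) then shows $F\circ\varphi$ is again a complex geodesic with the same left inverse as $\varphi$, so by Lempert uniqueness $F\circ\varphi=\varphi$; that is, $F$ fixes $\varphi(\mathbb{D})$ pointwise. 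As $\varphi$ ranges over the geodesics landing at $p$ these disks sweep out an open subset of $\Omega$, whence $F=\mathrm{id}$ there and, by the identity theorem, on all of $\Omega$.

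\emph{Main obstacle.} The crux is the passage from dimension $n$ to dimension $1$: one must secure the existence, boundary regularity, and transversality of the complex geodesics and their left inverses in the strongly pseudoconvex (not merely convex) setting, and then verify that the \emph{full} third-order tangency of $F$ to the identity at $p$ genuinely survives composition with $\varphi$ and $\tilde\varphi$. This is exactly where Lempert's regularity theory and the strong pseudoconvexity are indispensable; the one-dimensional endgame, by contrast, is soft.
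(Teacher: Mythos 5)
The paper does not prove this theorem --- it is quoted verbatim from Burns--Krantz \cite{BK} as background --- so your proposal has to stand on its own. Your one-dimensional argument is correct and is essentially the classical proof: the Cayley transfer computation $g(w)-w=\frac{2\epsilon}{(1-\zeta)(1-\zeta-\epsilon)}=o(1/|w|)$ is right, Julia--Wolff--Carath\'eodory gives $\Re g\ge\Re w$, and the growth contradiction for $1/(g-\mathrm{id})$ along the positive real axis finishes it. The reduction in the second part is also sound \emph{for the unit ball} (where the geodesics are linear slices and $\tilde\varphi(z)=\langle z,p\rangle$) and, modulo standard facts, for smoothly bounded \emph{strongly convex} domains via Lempert's theory.

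The genuine gap is exactly the point you flag as ``the main obstacle'' and then do not overcome: the theorem is about strongly \emph{pseudoconvex} domains, and Lempert's package --- complex geodesics of $\Omega$ landing at $p$ with $C^1$ extension to $\overline{\Delta}$, transversality, and above all a \emph{globally defined} holomorphic left inverse $\tilde\varphi\colon\Omega\to\Delta$ --- is a theorem about convex domains and simply does not transfer. Local convexifiability near $p$ only produces local objects, whereas a complex geodesic is a global isometry for $k_\Omega$ and its left inverse must be defined on all of $\Omega$; for a general strongly pseudoconvex domain (non-convex, possibly multiply connected) such geodesics and retractions need not exist, and this failure is precisely why \cite{BK}, and later Huang, argue differently: they control the normal component of $F-\mathrm{id}$ by a Hopf-lemma/peak-function argument applied to $\rho\circ F$ for a defining function $\rho$, plus a separate treatment of the tangential components, rather than by slicing $\Omega$ with geodesics. (Compare Theorem \ref{T:Huang} and Proposition \ref{P:cg} in the present paper, whose whole purpose is to manufacture usable analytic disks in situations where Lempert theory is unavailable --- and even there only inside a holomorphic retract and under extra hypotheses.) A secondary, smaller gap: even granting the geodesics, your uniqueness step ``same left inverse $\Rightarrow$ same geodesic'' needs the strong convexity of Lempert's setting (it fails, e.g., in the bidisc), and your $n=1$ case covers only the disk, not arbitrary smoothly bounded planar domains as the statement allows. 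So the proposal proves the ball case and plausibly the strongly convex case, but not the theorem as stated.
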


In \cite{H:CJM95}, Huang proved a ``localized" version of the Burns-Krantz rigidity (\cite[Theorem 2.5]{H:CJM95}), assuming only strong pseudoconvexity near the boundary point $p$. Furthermore, the following two Burns-Krantz type rigidity results, with an interior fixed point, were proven.

\begin{thm}\cite[Corollary 1.5]{H:CJM95}\label{T:HD}
Let $D$ be a smoothly bounded domain in $\cv$, $p\in \partial D$, and $F$ be a holomorphic self-map of $D$. If $F(z_0)=z_0$ for some $z_0\in D$ and $F(z)=z+o(|z-p|)$ as $z\rightarrow p$, then $F(z)\equiv z$.
\end{thm}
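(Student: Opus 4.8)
The plan is to reduce to the unit disc and run a Julia--Wolff--Carath\'eodory argument in which the interior fixed point forces equality in Julia's inequality. Assume first that $D$ is simply connected. By the Riemann mapping theorem there is a biholomorphism $\phi\colon D\to\mathbb{D}$, and since $\partial D$ is smooth the Kellogg--Warschawski theorem shows that $\phi$ extends to be $C^1$ up to the boundary near $p$ with $\phi'(p)\ne 0$. Put $g=\phi\circ F\circ\phi^{-1}$ and $w_0=\phi(z_0)$, and after a rotation normalise $\phi(p)=1$. Then $g$ is a holomorphic self-map of $\mathbb{D}$ with $g(w_0)=w_0$, and since $|\phi(z)-1|\asymp|z-p|$ with $\phi$ of bounded derivative near $p$, the hypothesis $F(z)=z+o(|z-p|)$ transfers to $g(w)=w+o(|w-1|)$ as $w\to 1$.

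Next I would read off the boundary data at $1$. Letting $w=r\in(0,1)$ tend to $1$, the relation $g(r)=r+o(1-r)$ gives both $\frac{1-|g(r)|}{1-r}\to 1$ and $\frac{g(r)-1}{r-1}\to 1$, so the boundary dilation coefficient $\alpha=\liminf_{w\to 1}\frac{1-|g(w)|}{1-|w|}$ is finite and, by the Julia--Wolff--Carath\'eodory theorem, equals the angular derivative $g'(1)=1$. Julia's lemma then gives, for all $w\in\mathbb{D}$,
\[
\frac{|1-g(w)|^2}{1-|g(w)|^2}\le\frac{|1-w|^2}{1-|w|^2}.
\]
Evaluating at the interior fixed point $w_0$, where $g(w_0)=w_0$, the two sides agree, so equality holds at an interior point. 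This equality case is the crux: the Cayley transform $C(w)=\frac{1+w}{1-w}$ carries $1$ to $\infty$ and, using $\frac{|1-w|^2}{1-|w|^2}=\frac{1}{\Re C(w)}$, turns the inequality into $\Re\tilde g(\zeta)\ge\Re\zeta$ on the right half-plane, where $\tilde g=C\circ g\circ C^{-1}$. The function $\Re(\tilde g(\zeta)-\zeta)$ is nonnegative and harmonic and vanishes at the interior point $C(w_0)$, so by the minimum principle it vanishes identically; hence $\tilde g(\zeta)=\zeta+ic$. Since $\tilde g$ fixes $C(w_0)$ we get $c=0$, so $g=\mathrm{id}$ and $F=\mathrm{id}$.

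For a general, possibly multiply connected, smoothly bounded $D$ I would pass to the universal covering $\pi\colon\mathbb{D}\to D$ and lift $F$ to the self-map $\tilde F$ of $\mathbb{D}$ with $\tilde F(\tilde z_0)=\tilde z_0$ for a fixed preimage $\tilde z_0$ of $z_0$. Choosing a disc $U$ about $p$ with $U\cap D$ simply connected, each sheet of $\pi^{-1}(U\cap D)$ maps biholomorphically and smoothly up to the boundary onto $U\cap D$, and I would use these sheets to transfer the tangency of $F$ at $p$ to the behaviour of $\tilde F$ near a boundary lift $\zeta_0$ of $p$.

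The main obstacle is exactly this transfer. The lift $\tilde F$, being pinned by the interior fixed point, need not fix $\zeta_0$: it sends $\zeta_0$ to $\gamma(\zeta_0)$ for some deck transformation $\gamma$, so near $\zeta_0$ one only obtains $\tilde F(w)=\gamma(w)+o(|w-\zeta_0|)$, i.e. $\tilde F$ is tangent to the automorphism $\gamma$ rather than to the identity. Carrying out the Julia/horocycle comparison then requires controlling the boundary dilation of $\tilde F$ against that of $\gamma$, where the estimates are sensitive to the mode of approach, and reconciling this twisted boundary behaviour with the interior fixed point---for instance by showing that $\gamma$ must be trivial, or by adapting the horocycle estimate to a non-fixed boundary point---is the delicate point. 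I note finally that in dimension one strong pseudoconvexity is vacuous, so every smoothly bounded $D\subset\cv$ falls under the Burns--Krantz theorem quoted above; an alternative strategy would be to exploit the interior fixed point to upgrade the first-order tangency to $o(|z-p|^3)$ and then invoke that theorem, though the Julia-lemma route is more transparent.
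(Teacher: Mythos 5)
This statement is quoted from Huang \cite[Corollary 1.5]{H:CJM95} and the paper supplies no proof of its own, so your argument has to stand on its own. For simply connected $D$ it does: the reduction to the disk via Kellogg--Warschawski, the computation showing the boundary dilation coefficient at $1$ is at most $1$, Julia's lemma, and the equality case handled by the minimum principle for $\Re(\tilde g(\zeta)-\zeta)$ in the half-plane are all correct, and this is a clean, self-contained treatment of that case.

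The genuine gap is the multiply connected case, which the theorem as stated certainly covers (e.g.\ a smoothly bounded annulus), and which you identify but do not close. After lifting to the universal cover, the lift $\tilde F$ pinned by $\tilde F(\tilde z_0)=\tilde z_0$ is tangent at the boundary lift $\zeta_0$ of $p$ only to some deck transformation $\gamma$, and you give no argument that $\gamma=\mathrm{id}$ nor a workable substitute for Julia's lemma against a non-identity $\gamma$; as written the proof simply stops there. The fallback you mention --- upgrading $o(|z-p|)$ to $o(|z-p|^3)$ and citing Burns--Krantz --- is not available: no mechanism for such an upgrade is offered, and producing it from first-order tangency plus an interior fixed point is essentially the content of the theorem itself. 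Two ways to close the gap without the covering space: (i) work on $D$ directly with the Green's function $G(\cdot,z_0)$; the Lindel\"of principle gives $G(F(z),z_0)\ge G(z,z_0)$, the difference is a nonnegative superharmonic function that is $o(\operatorname{dist}(z,\partial D))$ along the inner normal at $p$, so the Hopf lemma forces it to vanish identically, whence $|F'(z_0)|=1$ and $F\in\aut(D)$ by Cartan, after which the (finite or rotational) isotropy group of a non-simply-connected domain is killed by the first-order tangency at $p$; or (ii) mimic the paper's own higher-dimensional scheme in one variable: since holomorphic retracts of a planar domain are points or the whole domain, Theorem \ref{T:taut} plus the fixed point shows that either $F^m\to z_0$ locally uniformly, which contradicts the tangency at $p$ (Lemma \ref{L:V}), or $F\in\aut(D)$, which is Lemma \ref{L:auto}. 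Either route requires an argument you have not supplied.
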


\begin{thm}\cite[Corollary 2.7]{H:CJM95}\label{T:HO}
Let $\Omega$ be a smoothly bounded strongly convex domain in $\cv^n$, $n\ge 2$, $p\in \partial\Omega$, and $F$ be a holomorphic self-map of $\Omega$. If $F(z_0)=z_0$ for some $z_0\in \Omega$ and $F(z)=z+o(|z-p|^2)$ as $z\rightarrow p$, then $F(z)\equiv z$.
\end{thm}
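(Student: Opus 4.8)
The plan is to use Lempert's theory of complex geodesics, which is available precisely because $\Omega$ is smoothly bounded and strongly convex, to transfer the boundary hypothesis at $p$ into a one‑variable Schwarz–Julia statement on the disc $\Delta$, and then to recover the full derivative of $F$ at $z_0$ by a trace computation. First I would fix the unique complex geodesic $\varphi\colon\Delta\to\Omega$, smooth up to $\partial\Delta$ and transverse to $\partial\Omega$, with $\varphi(0)=z_0$ and $\varphi(1)=p$, together with its holomorphic (Lempert) left inverse $\rho\colon\Omega\to\Delta$, so that $\rho\circ\varphi=\mathrm{id}_\Delta$. Setting $g:=\rho\circ F\circ\varphi\colon\Delta\to\Delta$, the interior fixed point gives $g(0)=0$; and since $\varphi$ is smooth and transverse at $p$ we have $|\varphi(t)-p|\asymp(1-t)$, so $F(z)=z+o(|z-p|^2)$ together with the Lipschitz character of $\rho$ near $p$ yields $g(t)=t+o((1-t)^2)$ as $t\to1^-$. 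Writing $h(t):=g(t)/t$, which is holomorphic on $\Delta$ and maps into $\overline{\Delta}$ by the Schwarz lemma, this estimate forces $\liminf_{t\to1}\frac{1-|h(t)|}{1-t}=0$; since Julia's lemma bounds this lower limit below by $\frac{|1-h(0)|^2}{1-|h(0)|^2}$, we get $h(0)=1$, and the maximum principle gives $h\equiv1$, i.e.\ $g=\mathrm{id}_\Delta$.

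Next I would promote $g=\mathrm{id}$ to rigidity of the geodesic. Both $\rho$ and $F\circ\varphi$ are distance-decreasing for the Kobayashi metric and their composition is an isometry of $\Delta$, so $F\circ\varphi$ must itself be a complex geodesic; it passes through $z_0$ and lands at $p$ (using $F(z)\to p$ as $z\to p$), so by the uniqueness clause in Lempert's theorem and the triviality of $\aut(\Delta)$ fixing $0$ and $1$ we conclude $F\circ\varphi=\varphi$. Thus $F$ fixes $\varphi(\Delta)$ pointwise, and in particular each $dF_{\varphi(t)}$ is an endomorphism of $T_{\varphi(t)}\Omega\cong\cv^n$.

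The heart of the argument, and the step I expect to be the main obstacle, is that the hypothesis is imposed at the single point $p$ and a priori only pins down the tangent direction $\varphi'(0)$; recovering all of $dF_{z_0}$ requires an extra idea. I would resolve this by a trace computation. Consider $a(t):=\mathrm{tr}\,dF_{\varphi(t)}$, holomorphic in $t$. Because each $\varphi(t)$ is a fixed point of the distance-decreasing map $F$, the map $dF_{\varphi(t)}$ sends the bounded, balanced infinitesimal Kobayashi indicatrix at $\varphi(t)$ into itself, so its eigenvalues lie in $\overline{\Delta}$ and $|a(t)|\le n$. On the other hand, interior Cauchy estimates applied to $F(z)-z=o(|z-p|^2)$ on balls of radius comparable to the boundary distance $\delta(\varphi(t))\asymp(1-t)$ give $dF_{\varphi(t)}-\mathrm{Id}=o(1-t)$, hence $a(t)=n+o(1-t)$ as $t\to1^-$. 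Applying the same Julia/maximum-principle reasoning to $a/n\colon\Delta\to\overline{\Delta}$ forces $a(0)=n$. The eigenvalues of $dF_{z_0}$ therefore lie in $\overline{\Delta}$ and sum to $n$, so each equals $1$; and since $dF_{z_0}$ preserves the indicatrix it is power-bounded, hence has no nontrivial Jordan block, and therefore $dF_{z_0}=\mathrm{Id}$.

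With $F(z_0)=z_0$ and $dF_{z_0}=\mathrm{Id}$ on the bounded domain $\Omega$, Cartan's uniqueness theorem gives $F\equiv\mathrm{id}$, as desired. The delicate points to verify carefully are the boundary regularity and transversality of $\varphi$ and the Lipschitz regularity of $\rho$ near $p$ from Lempert's theory, the uniformity of the little-$o$ needed to feed the Cauchy estimate, and the passage from $g=\mathrm{id}$ to $F\circ\varphi=\varphi$; the trace step is where strong convexity (via Lempert) and the second-order boundary vanishing combine to control the transverse directions that a single geodesic cannot see.
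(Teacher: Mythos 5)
The paper does not actually prove Theorem \ref{T:HO}; it is quoted from Huang \cite{H:CJM95}, whose argument (like the strategy of the present paper for Theorem \ref{T:main}) runs: show that $F$ fixes pointwise the Lempert geodesic joining $z_0$ to $p$, and then invoke the boundary rigidity along a fixed analytic disc (the paper's Theorem \ref{T:Huang}, i.e.\ \cite[Theorem 2.2]{H:CJM95}) to force $F\equiv\mathrm{id}$. Your first half coincides with this: the reduction $g=\rho\circ F\circ\varphi$, the Julia-lemma argument using $g(0)=0$, and the promotion of $g=\mathrm{id}$ to $F\circ\varphi=\varphi$ via the isometry squeeze and the uniqueness (in smooth strongly convex domains) of the complex geodesic through a prescribed interior point and boundary point are all standard and correct; note that for this step only $g(t)=t+o(1-t)$ is needed, so you can bypass the Lipschitz regularity of $\rho$ entirely by a Schwarz--Pick estimate on the Kobayashi ball about $\varphi(t)$. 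Your second half, however, is genuinely different and, as far as I can see, sound: instead of Theorem \ref{T:Huang} you consider the holomorphic function $a(t)=\mathrm{tr}\,dF_{\varphi(t)}$, bound it by $n$ using the invariance of the bounded balanced Kobayashi indicatrix at each fixed point $\varphi(t)$, extract $a(t)=n+o(1-t)$ from interior Cauchy estimates on balls of radius comparable to $1-t$, and conclude $a(0)=n$ by Julia again, whence $dF_{z_0}=\mathrm{Id}$ (spectrum in $\overline{\Delta}$ with trace $n$ plus power-boundedness) and $F\equiv\mathrm{id}$ by Cartan. This buys a more elementary and self-contained conclusion, replacing Huang's delicate boundary analysis along the fixed disc by an interior argument; the price is that it leans on the full Lempert package (existence, boundary regularity and transversality of $\varphi$, uniqueness of the geodesic through $(z_0,p)$), which is exactly what fails in the strongly pseudoconvex setting of the present paper, where Theorem \ref{T:Huang} remains the workhorse. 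The only items to nail down are the citations for these Lempert facts; none of them is a genuine gap for smoothly bounded strongly convex $\Omega$.
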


Moreover, it was conjectured in \cite{H:CJM95} that a similar result should hold for bounded strongly pseudoconvex domains in $\cv^n$, $n\ge 2$. With the extra assumption that $\Omega$ is simply-connected, this was proven by Huang in \cite{H:IJM94} and \cite{H:SNS94} (cf. \cite{FR}). The following result is a combination of \cite[Corollary 3]{H:IJM94} and \cite[Theorem 5]{H:SNS94}.

\begin{thm}
Let $\Omega\subset \cv^n$, $n\ge 2$, be either a bounded simply-connected taut domain with Stein neighborhood basis or a bounded simply-connected pseudoconvex domain with $C^\infty$-smooth boundary. Let $p\in \partial \Omega$ be a $C^3$-smooth strongly pseudoconvex point. Let $F$ be a holomorphic self-map of $\Omega$ such that $F(q)=q$ for some $q\in \Omega$ and
$$F(z)=z+o(\|z-p\|^2),\ \ \ z\rightarrow p.$$
Then $F(z)\equiv z$.
\end{thm}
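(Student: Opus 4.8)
The plan is to reduce everything to the infinitesimal statement $dF_q=\mathrm{id}$, from which $F(z)\equiv z$ follows by Cartan's uniqueness theorem (valid since $\Omega$ is bounded and taut). The geometric engine is the theory of complex geodesics. Under the stated hypotheses one first has to supply, as technical input, the existence of extremal disks (Kobayashi complex geodesics) near the strongly pseudoconvex point $p$, together with the fact that the extremal disks ending at $p$ form an $(n-1)$-parameter family foliating a neighborhood of $q$; this is where the Stein neighborhood basis (respectively the $C^\infty$ pseudoconvexity) is consumed, through Lempert's theory and its localizations. I would then fix the extremal disk $\varphi\colon\Delta\to\Omega$ with $\varphi(0)=q$ and $\varphi(\zeta)\to p$ as $\zeta\to1$, write $L_0=\varphi(\Delta)$ for the leaf through $q$, and set $v_0=\varphi'(0)$.

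Second, I would pin down $F$ along the single leaf $L_0$. With $\pi$ the Lempert left inverse of $\varphi$, the map $\psi:=\pi\circ F\circ\varphi\colon\Delta\to\Delta$ is holomorphic with $\psi(0)=0$. Since the extremal disk meets $\partial\Omega$ transversally at the strongly pseudoconvex point $p$, one has $\|\varphi(\zeta)-p\|\asymp|1-\zeta|$, so the hypothesis $F(z)=z+o(\|z-p\|^2)$ transfers to $\psi(\zeta)=\zeta+o(|1-\zeta|)$ (with room to spare). Theorem~\ref{T:HD}, applied on $\Delta$ at the boundary point $1$, then forces $\psi\equiv\mathrm{id}$. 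Consequently equality holds throughout $k_\Delta(0,\zeta)=k_\Delta(0,\psi(\zeta))\le k_\Omega(q,F(\varphi(\zeta)))\le k_\Omega(q,\varphi(\zeta))=k_\Delta(0,\zeta)$, so $\zeta\mapsto F(\varphi(\zeta))$ is again a complex geodesic through $q$ with left inverse $\pi$; by uniqueness of complex geodesics it equals $\varphi$. Thus $F$ fixes $L_0$ pointwise, and in particular $dF_q(v_0)=v_0$.

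The crux is to control the directions transverse to $L_0$, and here I would invoke a \emph{preservation principle}: because $F$ is tangent to the identity to second order at the strongly pseudoconvex point $p$, it must send extremal disks ending at $p$ to extremal disks ending at $p$. Proving this is the main obstacle; it demands a delicate boundary analysis near $p$ (a scaling/normalization argument against the anisotropic blow-up of the Kobayashi metric, in which the precise second-order tangency is exactly what is needed to preserve extremality in the limit). Granting it, $F$ permutes the leaves of the foliation and hence induces a germ of a holomorphic self-map $\bar F$ of the leaf space at the fixed leaf $[L_0]$, whose derivative there is the map induced by $dF_q$ on the normal quotient $T_q\Omega/\cv v_0$. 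Identifying the leaf space near $[L_0]$ with the directions at $p$ and using that $F$ is tangent to the identity at $p$, one gets $\bar F=\mathrm{id}$ to first order, so $dF_q$ also acts as the identity on $T_q\Omega/\cv v_0$.

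Finally I would assemble these facts. Combining $dF_q(v_0)=v_0$ with triviality on the quotient shows that $dF_q$ is unipotent, $dF_q=\mathrm{id}+N$ with $N^2=0$. Since $\Omega$ is taut, the iterates $\{F^k\}$ form a normal family fixing $q$, so the derivatives $(dF_q)^k=d(F^k)_q$ are uniformly bounded; as $(\mathrm{id}+N)^k=\mathrm{id}+kN$, this forces $N=0$ and hence $dF_q=\mathrm{id}$. Cartan's uniqueness theorem then yields $F(z)\equiv z$. The steps using Theorem~\ref{T:HD} and the final linear algebra are routine; the existence of the extremal-disk foliation and, above all, the preservation principle of the third paragraph are where the real work lies.
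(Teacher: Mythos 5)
Your overall strategy (reduce to $dF_q=\mathrm{id}$ and invoke Cartan) is reasonable in spirit, and the single-leaf computation and the final linear algebra are routine, but the proposal has two gaps that are not technicalities --- they are the entire content of the theorem. First, the geometric input you posit does not exist in this generality: Lempert's theory (extremal disks that are complex geodesics, admitting holomorphic left inverses $\pi$, and foliating the domain) is a theorem about \emph{convex} domains. For a strongly pseudoconvex boundary point of a merely taut or pseudoconvex $\Omega$, the available localization (Proposition \ref{P:cg}) produces a complex geodesic only inside $U\cap M$ for a small neighborhood $U$ of $p$ and a holomorphic retract $M$ of dimension at least $2$; it gives neither a geodesic through an arbitrary interior point $q$ landing at $p$, nor a left inverse $\pi$, nor an $(n-1)$-parameter foliation of a neighborhood of $q$. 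Without $\pi$ your map $\psi=\pi\circ F\circ\varphi$ is undefined (so the appeal to Theorem \ref{T:HD} has nothing to act on), and without the foliation the leaf-space argument has no object to act on. Second, the ``preservation principle'' you defer is precisely the delicate boundary analysis of Huang's papers \cite{H:IJM94,H:SNS94}; asserting it, and asserting that the induced map on the leaf space is the identity ``to first order,'' leaves the transverse directions --- the crux, as you yourself say --- unproved. The identification of the derivative of the induced leaf-space map with the action of $dF_q$ on $T_q\Omega/\cv v_0$, and the claim that second-order tangency at $p$ forces this action to be trivial, are exactly the statements that require proof.

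The paper takes a different and more economical route, in which the interior fixed point is used only once: $F(q)=q$ forces $\{F^m\}$ to be non--compactly-divergent, so Theorem \ref{T:taut} yields a holomorphic retract $M$ with $F|_M\in\aut(M)$; Lemma \ref{L:V} shows $\dim M>0$ and $p\in\partial M$, Lemma \ref{L:auto} rules out $\dim M=n$, Lemma \ref{L:M} upgrades $F|_M$ to the identity, Lemma \ref{L:phi} produces a single Lipschitz-to-the-boundary disk in $M$ attached at $p$ and fixed by $F$, and Theorem \ref{T:Huang} then gives $F\equiv\mathrm{id}$ from the $o(|z-p|^2)$ hypothesis. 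No foliation, no left inverses, and no computation of $dF_q$ are needed. To salvage your approach you would essentially have to reprove Huang's preservation principle and extend Lempert theory past convexity, which is a paper in itself rather than a proof of this statement.
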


The aim of this note is to show that Huang's conjecture is true. Indeed, it would be a corollary of our main result below.

\begin{thm}\label{T:main}
Let $\Omega\subset \cv^n$, $n\ge 2$, be either a bounded taut domain with Stein neighborhood basis or a bounded pseudoconvex domain with $C^\infty$-smooth boundary. Let $p\in \partial \Omega$ be a $C^3$-smooth strongly pseudoconvex point. Let $F$ be a holomorphic self-map of $\Omega$ such that $F(z)=z+o(|z-p|^k)$ as $z\rightarrow p$. If $F(z)\not\equiv z$, then $k\le 2$ and\\
1) If $k=2$, then $\{F^m\}$ converges normally to $p$ and $F$ can not be an automorphism of $\Omega$.\\
2) If $k=1$, then either $\{F^m\}$ converges normally to $p$ or $F$ fixes a holomorphic retract with positive dimension. And $F$ can not be an automorphism of $\Omega$ unless $\Omega$ is biholomorphic to the unit ball.
\end{thm}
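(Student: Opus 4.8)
The plan is to combine three ingredients: the localized Burns--Krantz rigidity, the iteration theory of holomorphic self-maps of taut manifolds, and a reduction to the unit disk via complex geodesics landing at $p$. For the exponent bound, suppose $k\ge 3$; then $F(z)=z+o(|z-p|^{3})$ and, as $p$ is strongly pseudoconvex, the localized Burns--Krantz theorem \cite[Theorem 2.5]{H:CJM95} forces $F\equiv z$, contrary to assumption, so $k\le 2$. Since $\Omega$ is taut, Abate's iteration theory gives a dichotomy: either (I) $\{F^{m}\}$ is \emph{compactly divergent}, or (II) it is not, in which case $F$ has an interior fixed point $q$ and some subsequence of $\{F^{m}\}$ converges to a holomorphic retraction onto an $F$-invariant submanifold $M\ni q$ with $F|_{M}\in\aut(M)$.

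The technical heart is a one-variable reduction. For the interior point $q$ and the boundary point $p$ I take a complex geodesic $\varphi\colon\mathbb{D}\to\Omega$ with $\varphi(0)=q$ and $\varphi(\zeta)\to p$ as $\zeta\to 1$, together with its holomorphic left inverse (Lempert projection) $\tilde\rho\colon\Omega\to\mathbb{D}$, $\tilde\rho\circ\varphi=\mathrm{id}$. Setting $g=\tilde\rho\circ F\circ\varphi$, we obtain a self-map of $\mathbb{D}$ with $g(0)=0$. Because such a geodesic lands non-tangentially at the strongly pseudoconvex point $p$, one has $|\varphi(\zeta)-p|\le C\,|\zeta-1|$, while $\tilde\rho$ is Lipschitz near $p$; hence the hypothesis transfers as $|g(\zeta)-\zeta|\le\mathrm{Lip}(\tilde\rho)\,|F(\varphi(\zeta))-\varphi(\zeta)|=o(|\zeta-1|^{k})$, so in particular $g(\zeta)=\zeta+o(|\zeta-1|)$ since $k\ge 1$. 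Theorem \ref{T:HD} then yields $g\equiv\mathrm{id}$.

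Now read off the dynamics. In case (I), the tangency $F(z)=z+o(|z-p|^{k})$ gives boundary dilation coefficient $1$ at $p$ and, through a Julia-type lemma, $F$-invariance of the horospheres at $p$; since $p$ is strongly pseudoconvex the Wolff point is forced to be $p$, so $\{F^{m}\}$ converges normally to $p$. In case (II) with $k=2$, the relation $g\equiv\mathrm{id}$ together with a refinement tracking the transverse (fiber) derivative of $F$ and using the \emph{second-order} tangency --- the local counterpart of Theorem \ref{T:HO} --- forces $F\equiv z$ near the geodesic, hence $F\equiv z$ by the identity theorem, contradicting $F\not\equiv z$; thus case (II) cannot occur and $\{F^{m}\}$ converges normally to $p$, proving 1). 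In case (II) with $k=1$, if $\dim M=0$ then $M=\{q\}$ and $F^{m}\to q$, whence $g^{m}=\tilde\rho\circ F^{m}\circ\varphi\to 0$, contradicting $g\equiv\mathrm{id}$; therefore $\dim M\ge 1$, i.e.\ $F$ fixes a holomorphic retract of positive dimension, which together with case (I) proves the alternative in 2).

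For the automorphism statements, let $F\in\aut(\Omega)$ with $F\not\equiv z$; then $F$ extends smoothly across the strongly pseudoconvex point $p$ and restricts to a CR-automorphism germ of $\partial\Omega$ there whose $1$-jet at $p$ is the identity. If $F$ had an interior fixed point it would be \emph{elliptic}, but an elliptic automorphism with trivial $1$-jet at $p$ is the identity by Chern--Moser rigidity; hence $F$ is compactly divergent with $F^{m}\to p$, so $\aut(\Omega)$ is noncompact and the Wong--Rosay theorem gives $\Omega\cong\bv^{n}$. On $\bv^{n}$ the entire boundary isotropy at $p$ is annihilated by second-order tangency but not by first-order tangency (the survivors being the central parabolic automorphisms), so for $k=2$ no nontrivial automorphism exists, proving 1), while for $k=1$ the ball is exactly the exceptional case, proving 2). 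The main obstacle is the construction and boundary regularity, at the $C^{3}$ strongly pseudoconvex point $p$, of the complex geodesics and their left inverses for the stated domain classes (beyond Lempert's convex setting), on which the entire one-variable reduction rests; a secondary difficulty is the transverse second-order analysis upgrading $g\equiv\mathrm{id}$ to $F\equiv z$ in the $k=2$ fixed-point case.
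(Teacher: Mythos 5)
Your proposal has the right skeleton (localized Burns--Krantz for $k\le 2$, Abate's compactly-divergent/retract dichotomy, Huang's automorphism lemma or a Wong--Rosay argument for the automorphism claims), but the ``technical heart'' --- the one-variable reduction --- has a genuine gap that is precisely the difficulty this paper exists to overcome. You posit a complex geodesic $\varphi$ through the interior fixed point $q$ landing at $p$, together with a holomorphic, boundary-Lipschitz left inverse $\tilde\rho$. That package is Lempert theory and is available on bounded strongly \emph{convex} domains (it is how Huang proves Theorem \ref{T:HO}); for the domain classes in Theorem \ref{T:main} (taut with Stein neighborhood basis, or smooth pseudoconvex, with only the single point $p$ assumed strongly pseudoconvex) neither the geodesic through $q$ landing at $p$ nor the holomorphic retraction $\tilde\rho$ is known to exist. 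You flag this yourself as ``the main obstacle,'' but the entire reduction, and hence the proof, rests on it. A secondary error: non-compact divergence of $\{F^m\}$ on a taut manifold does \emph{not} give an interior fixed point --- Abate's theorem only gives a retract $M$ with $F|_M\in\aut(M)$, which may be a fixed-point-free rotation of an annulus --- so even the starting point $\varphi(0)=q$, $g(0)=0$ of your reduction is unavailable in the theorem (as opposed to the corollary).

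Moreover, even granting the geodesic and $g\equiv\mathrm{id}$, your step upgrading this to $F\equiv z$ in the $k=2$ non-divergent case (``a refinement tracking the transverse derivative\dots the local counterpart of Theorem \ref{T:HO}'') is a placeholder, not an argument: $\tilde\rho\circ F\circ\varphi=\mathrm{id}$ only says $F$ preserves the fibers of $\tilde\rho$ over the geodesic, not that $F$ fixes $\varphi(\Delta)$ pointwise. The paper's route is different and avoids all of this: it works entirely inside the limit retract $M$ of Abate's theorem, first proving $F|_M\equiv\mathrm{id}$ (Lemma \ref{L:M}: for $\dim M=1$ via Milnor's classification of Riemann surface automorphisms combined with the boundary-jet Lemma \ref{L:jets} applied on the Lipschitz graph $M$; for $\dim M\ge 2$ via Huang's Lemma 5), then producing a Lipschitz-to-the-boundary disk in $M$ landing at $p$ (Riemann map or universal cover of $M$ if $\dim M=1$, Huang's local complex geodesic inside $M$ from Proposition \ref{P:cg} if $\dim M\ge 2$; Lemma \ref{L:phi}), and finally applying Huang's Theorem \ref{T:Huang} to this pointwise-fixed disk to force $k=1$. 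These two lemmas are exactly the content your proposal is missing.
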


As an immediate corollary, we have the following.

\begin{cor}\label{C:main}
Let $\Omega$ and $p\in \partial \Omega$ be as in Theorem \ref{T:main}. If $F$ is a holomorphic self-map of $\Omega$ such that $F(z_0)=z_0$ for some $z_0\in \Omega$ and $F(z)=z+o(|z-p|^2)$ as $z\rightarrow p$, then $F(z)\equiv z$.
\end{cor}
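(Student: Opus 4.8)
The plan is to argue by contradiction, reducing everything to the trichotomy supplied by Theorem \ref{T:main}. Suppose $F$ satisfies the hypotheses of the corollary but $F(z)\not\equiv z$. Since $F(z)=z+o(|z-p|^2)$ as $z\rightarrow p$, we may invoke Theorem \ref{T:main} with $k=2$. As $F(z)\not\equiv z$, the theorem forces us into case 1): the iterate sequence $\{F^m\}$ converges normally on $\Omega$ to the boundary point $p$.

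The decisive step is then to confront this dynamical conclusion with the existence of the interior fixed point $z_0$. Because $F(z_0)=z_0$, one has $F^m(z_0)=z_0$ for every $m$, so the sequence $\{F^m(z_0)\}$ is constantly equal to $z_0\in \Omega$. On the other hand, normal convergence of $\{F^m\}$ to $p$ gives $F^m(z_0)\rightarrow p$. Hence $z_0=p$, contradicting $z_0\in \Omega$ and $p\in \partial \Omega$. This contradiction shows $F(z)\equiv z$, proving the corollary.

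I expect essentially no serious obstacle here, since the statement is an immediate consequence of the main theorem; the only point requiring care is verifying that the hypothesis $F(z)=z+o(|z-p|^2)$ places us precisely in the $k=2$ branch of Theorem \ref{T:main} rather than in the $k=1$ branch, whose dichotomy would a priori permit $F$ to fix a positive-dimensional holomorphic retract through $z_0$ without any contradiction. Using the sharper order $k=2$ eliminates the retract alternative and pins down normal convergence to $p$, after which the fixed-point obstruction closes the argument at once. Were one to apply the theorem only with the weaker order $k=1$, one would additionally have to rule out the retract scenario, so choosing the strongest available order of contact is exactly what keeps the proof short.
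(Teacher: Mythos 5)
Your proof is correct and is precisely the argument the paper intends: the paper states the corollary as ``immediate'' from Theorem \ref{T:main}, and the deduction is exactly your use of case 1) with $k=2$ to get normal convergence of $\{F^m\}$ to $p$, which is incompatible with the interior fixed point $z_0$.
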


Since a bounded strongly pseudoconvex domain with $C^2$-smooth boundary is taut and has Stein neighborhood basis, Corollary \ref{C:main} confirms Huang's conjecture.

\begin{rmk}
With the extra assumption that $\Omega$ is simply-connected, Theorem \ref{T:main} was given in \cite{H:SNS94}. Our main contributions are Lemmas \ref{L:M} and \ref{L:phi}.
\end{rmk}

\section{Proof of Theorem \ref{T:main}}

Let us first recall the following fundamental result from the iteration theory on taut manifolds (see e.g. \cite{B:Stein,A:book}).

\begin{thm}\label{T:taut}
Let $X$ be a taut manifold and $F$ be a holomorphic self-map of $X$. Assume that $\{F^m\}$ is not compactly divergent. Then there is a holomorphic retraction $\rho$ of $X$ onto a submanifold $M$ such that every limit point $h$ of $\{F^m\}$ is of the form $h=\phi\circ \rho$, for a suitable $\phi\in \aut(M)$. Furthermore, $\rho$ is a limit point of $\{F^m\}$, and $f:=F|_M\in \aut(M)$.
\end{thm}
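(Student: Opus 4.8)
The plan is to realize the retraction as a holomorphic idempotent sitting inside the limit set of the iterates, and then to read off the remaining structure from the algebraic properties of that set. \emph{Normality and point-tracking.} Tautness means precisely that $\textup{Hol}(X,X)$ is a normal family: every sequence has a subsequence converging uniformly on compacta to an element of $\textup{Hol}(X,X)$, or else is compactly divergent. Since $\{F^m\}$ is assumed not compactly divergent, I would fix $z_0\in X$ and a strictly increasing $(m_j)$ with $F^{m_j}(z_0)$ bounded, and pass to a subsequence so that $F^{m_j}\to h_0\in\textup{Hol}(X,X)$. Setting $p_j=m_{j+1}-m_j$ (which I may take to tend to $\infty$) and $w_j=F^{m_j}(z_0)\to h_0(z_0)=:w^\ast\in X$, the identity $F^{p_j}(w_j)=F^{m_{j+1}}(z_0)\to w^\ast$ keeps $\{F^{p_j}\}$ from being compactly divergent. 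This point-tracking observation — that differences of indices of a convergent subsequence again give a non-compactly-divergent sequence — is the reusable tool, and it shows at once that the limit set $\Gamma\subseteq\textup{Hol}(X,X)$ of $\{F^m\}$ is nonempty.

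\emph{Idempotency and the limit manifold.} Because exponents add and composition is continuous in the compact-open topology on the locally compact manifold $X$, a diagonal argument shows $F^{i_k+j_k}=F^{i_k}\circ F^{j_k}\to a\circ b$ whenever $a=\lim F^{i_k}$ and $b=\lim F^{j_k}$; thus $\Gamma$ is a commutative semigroup under composition, and the point-tracking argument upgrades to show $\Gamma$ is closed and compact. A nonempty compact topological semigroup always contains an idempotent, so I obtain $\rho\in\Gamma$ with $\rho\circ\rho=\rho$; in particular $\rho$ is a limit point. I then set $M:=\rho(X)$. For a holomorphic idempotent one has $M=\textup{Fix}(\rho)$, and the structure theory of holomorphic retracts (Cartan--Rossi) gives that $M$ is a closed complex submanifold, that $d\rho_x$ has constant rank $\dim M$, and that $\rho:X\to M$ is a holomorphic retraction.

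\emph{The automorphism assertions.} From $\rho=\lim F^{p_k}$ and continuity I get $\rho\circ F=\lim F^{p_k+1}=F\circ\rho$, hence $\rho\circ F^n=F^n\circ\rho$ for all $n$; in particular, for $x\in M$ one has $\rho(F(x))=F(\rho(x))=F(x)$, so $F(M)\subseteq M$. Since a holomorphic retract of a taut manifold is taut, $M$ is taut, and $\textup{id}_M=\rho|_M=\lim(F|_M)^{p_k}$ exhibits the identity as a non-compactly-divergent limit of iterates of $F|_M$; extracting $g=\lim(F|_M)^{p_k-1}$ then yields $g\circ(F|_M)=(F|_M)\circ g=\textup{id}_M$, so $f=F|_M\in\aut(M)$. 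For a general $h\in\Gamma$, $\phi:=h|_M=\lim(F|_M)^{n_j}$ is a limit of iterates of the automorphism $f$ and so lies in the relatively compact group $\overline{\langle f\rangle}\subseteq\aut(M)$; hence $\phi\in\aut(M)$. It then remains to prove the factorization $h=\phi\circ\rho$, equivalently $h\circ\rho=h$.

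\emph{The main obstacle.} I expect the genuine difficulty to be concentrated in two places. The first is the construction of $\rho$: proving that $\Gamma$ is actually compact (ruling out compactly divergent sequences of honest limit points) requires making the point-tracking estimates uniform, and then identifying $M=\rho(X)$ as a smooth complex submanifold onto which $d\rho$ projects is the nontrivial input about holomorphic retractions. The second, and I think sharper, obstacle is the factorization $h=\phi\circ\rho$ for an arbitrary limit point, i.e.\ that every limit point already maps $X$ into $M$. I would obtain this from the structure of the compact semigroup $\Gamma$: its kernel is a union of groups, $\rho$ is the identity of the maximal subgroup $\rho\circ\Gamma\circ\rho$, and the relations $\rho\circ h=h\circ\rho=\rho\circ h\circ\rho$ show every element of that subgroup has the form $\psi\circ\rho$ with $\psi\in\aut(M)$. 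The delicate point is that a general $h\in\Gamma$ lies in this subgroup — which reflects that the directions transverse to $M$ are uniformly contracted along the iterates defining $\rho$. Once $h\circ\rho=h$ is established, $h=(h|_M)\circ\rho=\phi\circ\rho$ follows immediately, completing the proof.
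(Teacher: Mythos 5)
A preliminary remark: the paper does not prove this theorem at all --- it is quoted as a known result from the iteration theory on taut manifolds (Bedford, and Abate's book), so the comparison here is with the standard Bedford--Abate proof. Your skeleton is exactly that standard proof: the normal-family dichotomy from tautness, the $\omega$-limit set $\Gamma$ of $\{F^m\}$ as an abelian semigroup under composition, an idempotent $\rho\in\Gamma$, $M=\rho(X)$ a closed complex submanifold by the Rossi/Cartan structure theory of retracts, and $f^{p_k}|_M\rightarrow \textup{id}_M$ forcing $f=F|_M\in\aut(M)$. All of that is right. But the two places you yourself flag as ``the main obstacle'' are left genuinely open, and your diagnosis of which one is hard is inverted.

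The real gap is the compactness of $\Gamma$, which you dispose of with ``the point-tracking argument upgrades to show $\Gamma$ is closed and compact.'' It does not, as written: your point-tracking shows that the particular gap sequence $\{F^{p_j}\}$ is not compactly divergent, but it does not exclude that some \emph{other} subsequence $\{F^{s_k}\}$ is compactly divergent while $\{F^{m_j}\}$ converges --- a priori the orbit of $z_0$ could oscillate between a compact set and the boundary, in which case $\Gamma$ would fail to capture all subsequential behavior and the idempotent argument would not control every limit point. Ruling this out is precisely Abate's nontrivial lemma (not compactly divergent $\Rightarrow$ $\{F^m\}$ relatively compact in $\textup{Hol}(X,X)$), whose proof uses the $k_X$-nonexpansiveness of the iterates together with the recurrence $F^{p_j}(w^*)\rightarrow w^*$ of the limit point $w^*=h_0(z_0)$; this is the analytic heart of the theorem and in your sketch it is asserted, not proved. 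By contrast, the step you call the ``sharper obstacle'' --- the factorization $h=\phi\circ\rho$ --- needs neither semigroup kernel theory nor your heuristic about ``uniformly contracted transverse directions'' (no hypothesis provides any such contraction). Once $\Gamma$ is compact it follows from the very index-splitting trick you already use: with $\rho=\lim_k F^{q_k}$ and $h=\lim_j F^{n_j}$, for each fixed $k$ extract $g_k=\lim_j F^{n_j-q_k}\in\Gamma$, so that $h=g_k\circ F^{q_k}$; letting $k\rightarrow\infty$ and extracting $g=\lim g_k\in\Gamma$ gives $h=g\circ\rho$. Commutativity of $\Gamma$ then yields $g(M)=g(\rho(X))=\rho(g(X))\subseteq M$, so $\phi:=g|_M$ maps $M$ to $M$ and $h=\phi\circ\rho$; and $\phi\in\aut(M)$ by the same inversion argument you used for $f$, since $\phi$ is a limit of iterates of $f$ and $f^{p_k}\rightarrow\textup{id}_M$. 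So your outline is salvageable with tools you already have for the factorization, but the compactness lemma --- the one step you treat as routine --- is exactly where the proof's content lies.
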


Recall that a holomorphic self-map $\rho$ of $X$ is a \textit{holomorphic retraction} if $\rho^2=\rho$. The submanifold $M=\rho(X)$ is called a \textit{holomorphic retract} of $X$.

Next, let us quote some results from Huang's work \cite{H:IJM94,H:SNS94,H:CJM95}.

\begin{lem}\cite[Lemma 6]{H:IJM94}\label{L:auto}
Let $\Omega$ and $p\in \partial \Omega$ be as in Theorem \ref{T:main}. Let $F$ be an automorphism of $\Omega$ such that $F(z)=z+o(|z-p|^k)$ as $z\rightarrow p$. If either $k=2$ or $k=1$ and $\Omega$ is not biholomorphic to the unit ball, then $F(z)\equiv z$.
\end{lem}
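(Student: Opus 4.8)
The plan is to read off the conclusion from the dynamics of the iterates $\{F^m\}$, reducing the only genuinely nontrivial situation to the unit ball $\bv^n$, where automorphisms with a boundary fixed point can be classified explicitly. First I would invoke Theorem \ref{T:taut}: since $F$ is an automorphism, either $\{F^m\}$ is relatively compact in $\aut(\Omega)$ (the elliptic case, in which $F$ has an interior fixed point), or $\{F^m\}$ is compactly divergent. The hypothesis $F(z)=z+o(|z-p|^k)$ with $k\ge 1$ already forces $p$ to be a boundary fixed point with $dF_p=\mathrm{Id}$, since $(F(z)-z)/|z-p|\to 0$; in particular the normal boundary-dilation coefficient at $p$ equals $1$, so $p$ is a neutral fixed point of $F$ (and of every iterate). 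The whole argument then splits along this dichotomy, and the goal is to show that a nontrivial $F$ can occur only in the compactly divergent case, and only on the ball with $k=1$.

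In the elliptic case, $\overline{\{F^m\}}$ is a compact abelian subgroup of $\aut(\Omega)$, some subsequence $F^{m_j}\to\mathrm{id}$, and $F$ fixes an interior point $q$, near which $F$ is holomorphically conjugate to its unitary linear part $U=dF_q$. I would then argue that the conjugation-invariant eigenvalue data of $U$ is detected by the boundary differential $dF_p$ through the admissible approach directions at $p$, which span $\cv^n$; the triviality $dF_p=\mathrm{Id}$ forces all eigenvalues of $U$ to be $1$, hence $U=\mathrm{Id}$, and therefore $F\equiv\mathrm{id}$ by Cartan's uniqueness theorem. Consequently any nontrivial $F$ must be compactly divergent.

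In the compactly divergent case, $\{F^m\}$ converges uniformly on compacta to a boundary point $\xi$, the Denjoy--Wolff point. Using Abate's boundary dynamics for taut/strongly pseudoconvex domains, the boundary fixed points of a hyperbolic automorphism have dilation $\ne 1$; as $p$ is neutral, $F$ cannot be hyperbolic, so $F$ is parabolic with unique fixed point $\xi=p$. Thus the orbit accumulates at the strongly pseudoconvex point $p$, and the Wong--Rosay theorem yields $\Omega\cong\bv^n$. Transferring $F$ to an automorphism of $\bv^n$ fixing a boundary point and passing to the Siegel/Heisenberg normal form, a direct computation shows such an $F$ is tangent to the identity to order exactly $1$ at its fixed point, i.e. its deviation is $O(|z-p|^2)$ but not $o(|z-p|^2)$. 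Hence $k=1$ and $\Omega\cong\bv^n$. Taking the contrapositive: if $k=2$, or if $k=1$ and $\Omega\not\cong\bv^n$, then $F\equiv\mathrm{id}$, which is the assertion.

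The main obstacle is the elliptic case, specifically the step that propagates the boundary information $dF_p=\mathrm{Id}$ to the interior fixed point $q$ in order to apply Cartan's theorem; the delicate point is relating the multipliers of $dF_p$ to the eigenvalues of the linear model $U$ at $q$. A robust alternative, avoiding any boundary-regularity input, is a scaling argument centered at $p$: the anisotropic dilations adapted to the strongly pseudoconvex point $p$ conjugate $F$ to a sequence whose limit is a linear automorphism of $\bv^n$ fixing a boundary point with trivial differential, which must be the identity, and one then traces this back to rule out nontrivial $F$. Either route isolates the same essential phenomenon, so I expect this transfer of tangency data from $p$ to the interior to be where the real work lies.
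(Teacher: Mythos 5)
The paper offers no proof of this lemma---it is quoted verbatim from Huang \cite[Lemma 6]{H:IJM94}---so your argument has to stand on its own. Its overall shape (dichotomy on the iterates, reduction to the ball via Wong--Rosay, explicit classification of $\aut(\bv^n)$ at a boundary fixed point) is sensible, and the final ball computation is correct: parabolic elements deviate from the identity by exactly $O(|z-p|^2)$ at their fixed point, hyperbolic elements have $dF_p\neq\mathrm{Id}$, and a unitary with non-tangential differential $\mathrm{Id}$ at a boundary point is trivial. But both halves of your dichotomy contain genuine gaps. In the non-compactly-divergent case, your parenthetical ``in which $F$ has an interior fixed point'' is unjustified and false in general: a recurrent automorphism of a taut domain need not fix any interior point (an irrational rotation of an annulus, or a torus rotation of a pseudoconvex Reinhardt domain avoiding the coordinate axes); Theorem \ref{T:taut} only gives $M=\Omega$ and $\overline{\{F^m\}}$ compact. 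So Cartan's uniqueness theorem has no fixed point $q$ to be applied at. Even granting one, the step you yourself identify as ``where the real work lies''---transferring the non-tangential condition $dF_p=\mathrm{Id}$ at the boundary to $dF_q=\mathrm{Id}$ in the interior---is asserted, not proved; ``eigenvalue data detected through admissible approach directions'' is not a mechanism, and the scaling alternative ends at ``one then traces this back,'' which is exactly the missing content. This is the step for which Huang's machinery of extremal discs attached at $p$ (the same technology the present paper deploys through Lemmas \ref{L:M} and \ref{L:phi} in the non-automorphism setting) exists.

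In the compactly divergent case you invoke a Denjoy--Wolff point and a hyperbolic/parabolic classification with boundary dilation coefficients. But $\Omega$ is strongly pseudoconvex only at the single point $p$; globally it is merely taut (or smooth pseudoconvex), so Abate's Denjoy--Wolff theory does not apply: compactly divergent iterates need not converge to a single boundary point, and ``dilation at a boundary fixed point'' is not even defined in this generality. Hence the chain ``$p$ is neutral $\Rightarrow$ $F$ is parabolic $\Rightarrow$ the Denjoy--Wolff point is $p$ $\Rightarrow$ orbits accumulate at $p$'' does not go through as written, and it is precisely this orbit accumulation that Wong--Rosay needs. The repair is the paper's Lemma \ref{L:V} (Huang's Lemma 7): it produces a point whose entire forward orbit stays in an arbitrarily small neighborhood of $p$, which together with compact divergence yields accumulation at a strongly pseudoconvex boundary point near $p$ (strong pseudoconvexity being an open condition), whence $\Omega\cong\bv^n$. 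With that substitution, and with the elliptic case genuinely handled, your reduction to the ball would close the argument; as written, both gaps are real.
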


\begin{lem}\cite[Lemma 7]{H:IJM94}\label{L:V}
Let $\Omega$ and $p\in \partial \Omega$ be as in Theorem \ref{T:main}. Let $F$ be a holomorphic self-map of $\Omega$ such that $F(z)=z+o(|z-p|)$ as $z\rightarrow p$. Then for any neighborhood $V$ of $p$, there exists a point $z\in V\cap \Omega$ such that $f^m(z)\in V$ for any $m\ge 1$.
\end{lem}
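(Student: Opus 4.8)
The plan is to run Abate's theory of horospheres, localized at the strongly pseudoconvex point $p$. Write $k_\Omega$ for the Kobayashi distance of $\Omega$ (a genuine distance since $\Omega$ is bounded and taut), fix a base point $z_0\in\Omega$, and for $R>0$ consider the small horosphere
\[
E_p(R)=\Bigl\{z\in\Omega:\ \limsup_{w\to p}\bigl[k_\Omega(z,w)-k_\Omega(z_0,w)\bigr]<\tfrac12\log R\Bigr\}.
\]
The whole argument rests on two facts about these sets at a strongly pseudoconvex boundary point, both of which reduce to the localization of the Kobayashi metric near $p$: a generalized Julia lemma giving $F$-invariance, and the shrinking $E_p(R)\to\{p\}$ as $R\to0$. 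Granting these, the conclusion is immediate: each $E_p(R)$ is nonempty (points close to $p$ along the inner normal lie in it), so choosing $R$ small enough that $E_p(R)\subseteq V$ and picking any $z\in E_p(R)$ gives $F^m(z)\in E_p(R)\subseteq V$ for all $m\ge1$, which is exactly the assertion.

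First I would verify the $F$-invariance. By the generalized Julia lemma, if the boundary dilation coefficient
\[
\beta_p(F)=\liminf_{z\to p}\exp\bigl[2\bigl(k_\Omega(z_0,z)-k_\Omega(z_0,F(z))\bigr)\bigr]
\]
satisfies $\beta_p(F)\le1$, then $F(E_p(R))\subseteq E_p(\beta_p(F)R)\subseteq E_p(R)$. To bound $\beta_p(F)$ it suffices to exhibit a single sequence along which the exponent tends to a nonpositive limit, and the inner normal approach $z_t=p-t\nu\to p$ is the natural candidate: along it the boundary distance satisfies $\delta(z_t)\asymp t$, while $F(z_t)-z_t=o(t)$ forces $\delta(F(z_t))=\delta(z_t)+o(t)$, hence $\delta(F(z_t))/\delta(z_t)\to1$. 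Combining this with the boundary estimate $k_\Omega(z_0,z)=-\tfrac12\log\delta(z)+O(1)$, valid near a strongly pseudoconvex point, gives $k_\Omega(z_0,z_t)-k_\Omega(z_0,F(z_t))\to0$, whence $\beta_p(F)\le1$. This is where the hypothesis $F(z)=z+o(|z-p|)$ enters, and it is the exact analogue of the one-variable fact that such an $F$ makes $p$ a parabolic boundary fixed point with angular derivative $1$, so that Julia's lemma renders every horocycle invariant.

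Next I would establish the shrinking: for every neighborhood $V$ of $p$ there is $R_0>0$ with $E_p(R)\subseteq V$ whenever $R<R_0$. Arguing by contradiction, were there points $z_R\in E_p(R)$ with $|z_R-p|\ge\varepsilon_0$ for arbitrarily small $R$, then $\limsup_{w\to p}[k_\Omega(z_R,w)-k_\Omega(z_0,w)]\to-\infty$; but the localized lower bounds on $k_\Omega$ at the strongly pseudoconvex point $p$ bound this quantity below in terms of $\mathrm{dist}(z_R,p)$, contradicting $|z_R-p|\ge\varepsilon_0$. This is the step where strong pseudoconvexity at $p$ is genuinely used, and I expect it to be the main obstacle, precisely because $\Omega$ is assumed strongly pseudoconvex only at the single point $p$ (elsewhere merely taut with Stein neighborhood basis, or pseudoconvex and $C^\infty$). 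One therefore cannot invoke Abate's horosphere theory off the shelf; instead both the Julia lemma and the shrinking must be run through the \emph{local} comparison of $k_\Omega$ with the Kobayashi distance of a model strongly convex domain attached at $p$, which is the same localization of the Kobayashi metric underlying Huang's local Burns--Krantz theorem. Once these two facts are secured, the proof concludes as indicated above.
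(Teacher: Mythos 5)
The paper offers no proof of this lemma: it is imported verbatim from Huang (\cite[Lemma 7]{H:IJM94}), so there is no in-paper argument to compare against, and your proposal has to be judged on its own. The horosphere/Julia-lemma strategy is a natural and standard route to an orbit-confinement statement of this type, but as written the argument has concrete gaps at each of its three load-bearing points.

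First, the derivation of $\beta_p(F)\le 1$ does not follow from what you wrote: from $k_\Omega(z_0,z)=-\tfrac12\log\delta(z)+O(1)$ and $\delta(F(z_t))/\delta(z_t)\to 1$ you only get $k_\Omega(z_0,z_t)-k_\Omega(z_0,F(z_t))=\tfrac12\log\bigl(\delta(F(z_t))/\delta(z_t)\bigr)+O(1)=O(1)$; the two $O(1)$ terms do not cancel, and an additive constant in the $\liminf$ multiplies $\beta$ by a constant, which destroys the invariance you need. The correct (and easier) route is the triangle inequality: $|k_\Omega(z_0,z_t)-k_\Omega(z_0,F(z_t))|\le k_\Omega(z_t,F(z_t))\le \tanh^{-1}\bigl(|F(z_t)-z_t|/r_t\bigr)\to 0$, where $r_t\asymp t$ is the radius of a Euclidean ball about $z_t$ contained in $\Omega$ and $|F(z_t)-z_t|=o(t)$. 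Second, the generalized Julia lemma does not give $F(E_p(R))\subseteq E_p(\beta R)$: it gives containment in a \emph{big} (liminf) horosphere centered at some boundary point $\sigma$ that must separately be identified with $p$, so the inclusion cannot be iterated as stated. One must instead apply the Julia lemma to each iterate $F^m$ directly, checking $\beta_p(F^m)\le 1$ by summing the estimate above along the orbit (using that each $F^j$ is $k_\Omega$-nonexpansive) and checking that the target point is again $p$ because $F^j(z_t)=z_t+o(t)$ for each fixed $j$; the shrinking statement then has to be proved for big horospheres. Third, and this is where the real mathematical content lies, both the nonemptiness of $E_p(R)$ and the shrinking of $F_p(R)$ to $\{p\}$ require two-sided localized estimates on $k_\Omega$ at a point that is strongly pseudoconvex only at $p$, with $\Omega$ elsewhere merely taut or pseudoconvex. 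You correctly flag that Abate's theory cannot be invoked off the shelf, but the needed localization (which runs through a global holomorphic peak function at $p$ --- exactly where the Stein-neighborhood-basis or global-pseudoconvexity hypotheses enter) is the hard part and is left entirely unproven. As it stands the proposal is a plausible outline rather than a proof.
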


\begin{prop}\cite[Proposition 1]{H:SNS94}\label{P:cg}
Let $\Omega$ and $p\in \partial \Omega$ be as in Theorem \ref{T:main}. Suppose that $M\subset \Omega$ is a holomorphic retract with complex dimension greater than 1 and that $p\in \partial M$. Then, for any neighborhood $U$ of $p$, there is a complex geodesic $\varphi$ of $\Omega$ with $\varphi(\Delta)\subset U\cap M$, $\varphi(1)=p$, and $\varphi\in C^1(\overline{\Delta})$.
\end{prop}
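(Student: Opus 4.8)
The plan is to reduce the statement to the construction of a single complex geodesic that is forced to lie in $M$, and then to produce that geodesic by a limiting argument anchored at the strongly pseudoconvex point $p$.

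First I would exploit that $M=\rho(\Omega)$ is a holomorphic retract. Both the inclusion $M\hookrightarrow\Omega$ and the retraction $\rho$ are distance-decreasing for the Kobayashi distance, so $k_\Omega$ restricted to $M$ equals $k_M$; hence every complex geodesic of $M$ is a complex geodesic of $\Omega$, and $M=\{z:\rho(z)=z\}$ is closed in $\Omega$. It therefore suffices to exhibit a complex geodesic $\varphi$ of $\Omega$ with $\varphi(\Delta)\subset M\cap U$, $\varphi(1)=p$ and $\varphi\in C^1(\overline{\Delta})$. I would also record the rigidity principle used repeatedly below: if $\varphi$ is a complex geodesic of $\Omega$ with $\varphi(t_0)\in M$ and $\varphi'(t_0)\in T_{\varphi(t_0)}M$, then $\psi:=\rho\circ\varphi$ shares its $1$-jet at $t_0$ with $\varphi$ (because $d\rho$ is the identity on $T_{\varphi(t_0)}M$) and is distance-decreasing, so $\psi$ is infinitesimally extremal at $t_0$ in the same direction as the geodesic $\varphi$; by the uniqueness of complex geodesics (Lempert; Royden--Wong) this forces $\psi=\varphi$, i.e. $\varphi(\Delta)\subset M$.

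Next I would set up the local geometry at $p$. Since $p$ is a $C^3$-smooth strongly pseudoconvex point, I would choose a neighborhood $W\ni p$ and a biholomorphism carrying $\Omega\cap W$ onto a set that is strongly convex near the image of $p$, using the standard local convexification of strongly pseudoconvex hypersurfaces. In this model Lempert's theory supplies, through each interior point and direction, a unique complex geodesic (stationary disc); these discs extend to $C^1(\overline{\Delta})$, depend continuously on their data, and shrink to $p$ as their data approach $p$. Combined with the localization of the Kobayashi metric at the strongly pseudoconvex point $p$, this produces genuine complex geodesics of $\Omega$ through points near $p$, with a distinguished boundary endpoint near $p$ and uniform $C^1$ boundary estimates.

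With this in hand I would run the limiting construction inside $M$. Since $\dim_{\cv}M\ge 2$ and $p\in\partial M$, I would pick $q_j\in M$ with $q_j\to p$ and directions $v_j\in T_{q_j}M$ chosen (using that the complex tangent space of $M$ has dimension at least two) so that the complex geodesic $\varphi_j$ of $\Omega$ through $q_j$ in direction $v_j$ has its distinguished endpoint $\varphi_j(1)\to p$. By the rigidity principle each $\varphi_j(\Delta)\subset M$. By tautness of $\Omega$ the family $\{\varphi_j\}$ is normal, and I would extract a locally uniform limit $\varphi$; since $M$ is closed in $\Omega$, $\varphi(\Delta)\subset M$, and for $q_j$ close enough to $p$ the images stay in $U$. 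The hard part will be the last step: showing that $\varphi$ does not degenerate to a constant, that it lands exactly at $p$, and that it is $C^1$ up to the boundary. This is where strong pseudoconvexity and the $C^3$ regularity are essential—one needs uniform lower bounds that prevent the geodesics $\varphi_j$ from collapsing, together with the uniform Lempert boundary estimates for stationary discs, to conclude that the limit $\varphi$ is a genuine complex geodesic with $\varphi(1)=p$ and $\varphi\in C^1(\overline{\Delta})$. A secondary subtlety, and the reason the hypothesis $\dim M>1$ is used, is that one needs enough tangential directions in $T_{q_j}M$ to steer the geodesics toward $p$ while keeping them inside $M$.
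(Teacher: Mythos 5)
The paper offers no proof of this proposition: it is imported verbatim as \cite[Proposition 1]{H:SNS94}, so the only fair comparison is with what Huang's argument actually requires. Your outline follows the right architecture (the retract gives $k_\Omega|_M=k_M$, the competitor $\rho\circ\varphi$ plus uniqueness of extremal discs forces geodesics tangent to $M$ to lie in $M$, and a limiting construction anchors a geodesic at $p$), but it has two genuine gaps. First, the uniqueness of complex geodesics with prescribed $1$-jet is a \emph{convexity} phenomenon: Lempert and Royden--Wong apply to (strongly) convex domains, whereas here $\Omega$ is merely a bounded taut or pseudoconvex domain that is strongly pseudoconvex only near $p$. Local convexification replaces $\Omega\cap W$ by a different domain, and localization of the Kobayashi metric at $p$ only gives asymptotic comparability of metrics, so an extremal disc of $\Omega$ is not thereby an extremal disc of the convexified model; the existence, uniqueness, $C^1(\overline{\Delta})$ regularity and continuous dependence of extremal discs of $\Omega$ near a strongly pseudoconvex boundary point is exactly the content of Huang's preservation principle in \cite{H:IJM94}, a substantial theorem that your sketch silently assumes rather than proves.

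Second, you explicitly defer ``the hard part'': that the limit disc does not degenerate, that it lands at $p$, and that it is $C^1(\overline{\Delta})$. These are the actual conclusions of the proposition, and they do not follow from tautness and normal families alone; they require the uniform Lempert-type boundary estimates and the continuous dependence just mentioned, together with the description of $M$ near $p$ as a Lipschitz graph containing the complex normal direction (from \cite{H:IJM94}, cf. \cite{FR}) in order to choose directions $v_j\in T_{q_j}M$ whose discs are transversally attached to $\partial\Omega$ at points converging to $p$. Relatedly, your remark on why $\dim_\cv M>1$ is needed stays at the level of a guess: for $l=1$ any complex geodesic contained in $M$ would have to be an isometry of $\Delta$ onto $M$ itself, so no ``small'' geodesic in $U\cap M$ can exist in general, which is precisely why the paper treats $l=1$ by a separate argument (Lemma \ref{L:phi} via \cite{FR}) rather than by this proposition. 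As it stands, your proposal is a plausible roadmap to Huang's proof, not a proof.
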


Recall that a holomorphic map $\varphi:\Delta\rightarrow \Omega$ is called a \textit{complex geodesic} of $\Omega$ (see e.g. \cite{A:book}), if it is an isometry between $k_\Delta$ and $k_\Omega$, i.e. $k_\Omega(\varphi(\zeta_1),\varphi(\zeta_2))= k_\Delta(\zeta_1,\zeta_2)$ for all $\zeta_1,\zeta_2\in \Delta$. Here, $k_\Omega(z_1,z_2)$ denotes the \textit{Kobayashi distance} of $z_1,z_2\in \Omega$ and $\Delta$ is the unit disk.

\begin{thm}\cite[Theorem 2.2]{H:CJM95}\label{T:Huang}
Let $\Omega$ be a bounded domain in $\cv^n$, $n\ge 2$. Let $\varphi:\Delta\rightarrow \Omega$ be a proper holomorphic map which is Lipschitz continuous near $\varphi(1)=p\in \partial\Omega$. Suppose that there exists a defining function $\rho$ of $\Omega$ which is $C^1$-smooth, with $d\rho\neq 0$, near $p$ such that $\rho\circ \varphi$ is subharmonic. If $F$ is a holomorphic self-map of $\Omega$ fixing $\varphi(\Delta)$ with $F(z)=z+o(|z-p|^2)$ as $z\rightarrow p$, then $F(z)\equiv z$.
\end{thm}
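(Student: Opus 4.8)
The plan is to reduce the rigidity to a one-variable boundary Schwarz lemma carried out in the directions normal to the fixed disc. Since $F$ fixes $M:=\varphi(\Delta)$ pointwise we have $F\circ\varphi=\varphi$, so the restriction of $F$ to $M$ carries no information; the content must come from the normal directions. Concretely, near a point $\varphi(\zeta)\in M$ I would choose a holomorphic frame splitting $\cv^n$ into the tangent direction of $M$ and an $(n-1)$-dimensional normal complement, and expand
\[ F(\varphi(\zeta)+w)=\varphi(\zeta)+A(\zeta)\,w+Q(\zeta,w)+\cdots, \]
where $w$ runs over the normal complement, $A(\zeta)$ is linear and $Q(\zeta,\cdot)$ quadratic. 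Because $F$ is holomorphic and fixes $M$, each coefficient $A(\zeta),Q(\zeta,\cdot),\dots$ is a holomorphic function of $\zeta\in\Delta$ (a section of the relevant bundle). The goal becomes to show that all these normal jets agree with those of the identity; note that the standing hypothesis $n\ge 2$ is exactly what makes this normal expansion nontrivial.

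Next I would convert the order-two boundary tangency at $p$ into boundary conditions on the jet functions at $\zeta=1$. Since $\varphi$ is Lipschitz near $\zeta=1$ we have $|\varphi(\zeta)-p|\le C|\zeta-1|$, and since $\rho$ is $C^1$ with $d\rho\neq0$ near $p$, the relation $F(z)=z+o(|z-p|^2)$ yields $\rho\circ F(z)=\rho(z)+o(|z-p|^2)$. Evaluating along $M$ and along the normal slices then shows that $A(\zeta)\to I$ and $Q(\zeta,\cdot)\to0$ as $\zeta\to1$, and that these boundary values are attained with a quantitatively (sub-linearly) vanishing defect. This is the higher-dimensional replacement for the boundary data used in the one-variable Burns-Krantz theorem.

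The heart of the argument, and the step I expect to be the main obstacle, is a one-variable Hopf/boundary-Schwarz lemma that upgrades these finite-order data at the single boundary point $\zeta=1$ into the global identity $A(\zeta)\equiv I$ on all of $\Delta$. Here the two structural hypotheses do the work: the constraint that $F$ maps $\Omega$ into $\Omega$ gives $\rho\circ F\le0$, while the assumed subharmonicity of $u:=\rho\circ\varphi$ (a negative subharmonic function on $\Delta$ with $u(1)=0$) supplies the comparison function on which to run a Hopf-lemma estimate. The delicate point is to package the normal derivative $A(\zeta)$ into a holomorphic self-map datum of $\Delta$ obeying a Schwarz-Pick-type inequality, so that the vanishing of its boundary defect forces it to be constant; subharmonicity of $u$ together with $d\rho\neq0$ are precisely what legitimize this comparison near the possibly non-smooth part of $\partial\Omega$.

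Finally, once $A(\zeta)\equiv I$, so that $dF=\mathrm{id}$ along $M$, I would bootstrap: feeding this back into the expansion improves the boundary order available for the quadratic term, and the same mechanism forces $Q\equiv0$, and inductively all higher normal jets to vanish. Since each normal-jet function is holomorphic on $\Delta$ and vanishes near $\zeta=1$, it vanishes identically on $\Delta$, so $F-\mathrm{id}$ vanishes to infinite order in the normal directions along $M$. By holomorphicity this forces $F=\mathrm{id}$ on a full neighborhood of $M$, whence the identity theorem on the connected domain $\Omega$ gives $F(z)\equiv z$.
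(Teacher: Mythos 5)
A preliminary remark: this statement is quoted in the paper as an external result (Theorem 2.2 of \cite{H:CJM95}); the paper itself contains no proof of it, so I can only measure your proposal against what a complete proof must contain.

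Your outline correctly identifies the overall shape of the argument (reduce to the first normal jet $A(\zeta)=dF_{\varphi(\zeta)}$ along the fixed disc, which is holomorphic in $\zeta$; use the Lipschitz bound $|\varphi(\zeta)-p|\le C|1-\zeta|$ together with the Hopf lemma for the negative subharmonic function $u=\rho\circ\varphi$, which gives $-u(r)\ge c(1-r)$ and hence a lower bound on the distance from $\varphi(r)$ to $\partial\Omega$, to convert $F=\mathrm{id}+o(|z-p|^2)$ into the radial decay $A(r)-I=o(1-r)$ via Cauchy estimates). But the step you yourself flag as ``the main obstacle'' is precisely the content of the theorem, and you do not supply it. A holomorphic matrix function on $\Delta$ that tends to $I$ along a radius, even at the rate $o(1-r)$, need not be constant; what forces rigidity is a \emph{global one-sided bound} of Herglotz type (schematically, $\Re\langle\partial\rho,(A(\zeta)-I)v\,\eta(\zeta)\rangle\le C(-u(\zeta))$ for a suitable holomorphic weight $\eta$), extracted from $\rho\circ F\le 0$ by perturbing the disc in a normal direction; only then does Harnack/Hopf applied to the resulting function with nonnegative real part and radial decay $o(1-r)$ force it to vanish identically. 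Constructing this function is delicate here because $\rho$ is only defined and $C^1$ near $p$, so the inequality cannot even be written on all of $\Delta$ and one must work on a half-disc at $\zeta=1$ with a localized Phragm\'en--Lindel\"of/Hopf argument. None of this is in your proposal, so the central step is missing rather than merely sketched.

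Your concluding step 4 also contains an error and is, in any case, unnecessary. The jet functions do not ``vanish near $\zeta=1$'' in any open set; at best they tend to $0$ as $\zeta\to1$, which for a holomorphic function on $\Delta$ implies nothing (consider $1-\zeta$). Worse, the Cauchy estimate for the second normal jet $Q(\zeta,\cdot)$ only yields $|Q(r,\cdot)|=o(1)$ with \emph{no} rate (the available gain $o(|z-p|^2)$ is exactly consumed by dividing by $\delta^2\sim(1-r)^2$), so the same Hopf mechanism does not apply to $Q$; this is why the classical Burns--Krantz theorem needs $o(|z-p|^3)$ when no disc is fixed. The correct and much shorter ending is: once $A(\zeta_0)=I$ at a single interior point $\zeta_0$, the map $F$ has the fixed point $\varphi(\zeta_0)$ with $dF_{\varphi(\zeta_0)}=I$, and Cartan's uniqueness theorem for holomorphic self-maps of bounded domains gives $F\equiv\mathrm{id}$ at once.
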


We also need the following lemma.

\begin{lem}\cite[Lemma 2.2]{BZ:jets}\label{L:jets}
Let $\Omega$ be a domain in $\cv^{n+1}$ having $p$ as a smooth boundary point and $f:\Omega\rightarrow \cv^m$ be holomorphic. Suppose that the non-tangential differential $df_p$ of $f$ at $p$ exists. Then $df_{z_k}\rightarrow df_p$ for any sequence $\{z_k\}$ in $\Omega$ converging non-tangentially to $p$.
\end{lem}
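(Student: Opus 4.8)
The plan is to reduce the statement to a Cauchy-estimate argument on balls whose radius is comparable to the distance to $p$. First I would normalize so that $p=0$ and, replacing $f$ by $f-f(0)$, assume $f(0)=0$ (interpreting $f(0)$ as the non-tangential boundary value, which exists because $df_p$ does). Writing $L:=df_p$ for the given non-tangential differential and setting
$$g(z):=f(z)-L(z),$$
the hypothesis becomes $g(z)=o(|z|)$ as $z\to 0$ non-tangentially, and since $L$ is linear we have $dg_z=df_z-L$ for every interior $z$. Thus the conclusion $df_{z_k}\to df_p$ is equivalent to $dg_{z_k}\to 0$, and it suffices to prove the latter.

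Next I would exploit the geometry of non-tangential approach at the smooth point $p$. Fix a non-tangential cone $\Gamma$ containing the tail of $\{z_k\}$. Because $\partial\Omega$ is smooth at $p$, there is a constant $c>0$ with $\mathrm{dist}(z,\partial\Omega)\ge c\,|z-p|$ for all $z\in\Gamma$ near $p$; moreover, for a fixed small $\varepsilon\in(0,c)$ the closed ball $\overline{B(z_k,\varepsilon|z_k-p|)}$ is contained in $\Omega$ and in a slightly wider cone $\Gamma'$ that is still non-tangential, while every $w\in B(z_k,\varepsilon|z_k-p|)$ satisfies $|w-p|\le(1+\varepsilon)|z_k-p|$. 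Consequently the pointwise estimate $g(w)=o(|w-p|)$ holds uniformly over such balls: given $\eta>0$, for all large $k$ and all $w\in B(z_k,\varepsilon|z_k-p|)$ we have $|g(w)|\le \eta(1+\varepsilon)|z_k-p|$.

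Finally I would apply the Cauchy estimate for holomorphic functions on this ball. There is a constant $C$ depending only on $n$ and $m$ such that
$$|dg_{z_k}|\le \frac{C}{\varepsilon|z_k-p|}\,\sup_{w\in B(z_k,\varepsilon|z_k-p|)}|g(w)|\le \frac{C(1+\varepsilon)}{\varepsilon}\,\eta.$$
Since $\varepsilon$ is fixed and $\eta>0$ is arbitrary, letting $k\to\infty$ yields $dg_{z_k}\to 0$, which is the desired conclusion.

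I expect the main obstacle to be the second step: making precise, using only smoothness of $\partial\Omega$ at $p$, that a ball of radius proportional to $|z_k-p|$ about a non-tangentially approaching point both stays inside $\Omega$ and remains within a non-tangential cone, so that the $o(|z-p|)$ hypothesis can be upgraded from a pointwise to a uniform (supremum-over-ball) bound. The remaining ingredients—the identity $dg_z=df_z-L$ and the Cauchy estimate—are routine.
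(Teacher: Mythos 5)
The paper does not prove this lemma at all---it is quoted verbatim from Bracci--Zaitsev \cite{BZ:jets}---but your argument is correct and is essentially the standard proof given there: subtract the linear part $L=df_p$, use smoothness of $\partial\Omega$ at $p$ to place a ball of radius $\varepsilon|z_k-p|$ inside $\Omega$ and inside a slightly wider non-tangential cone (so the $o(|z-p|)$ bound holds uniformly on it), and conclude by the Cauchy estimate. I see no gaps; the geometric step you flag as the main obstacle goes through exactly as you sketch, since for $w\in B(z_k,\varepsilon|z_k-p|)$ one has $\mathrm{dist}(w,\partial\Omega)\ge (c-\varepsilon)|z_k-p|$ and $(1-\varepsilon)|z_k-p|\le|w-p|\le(1+\varepsilon)|z_k-p|$, which keeps $w$ in a non-tangential region.
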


Now, let $\Omega$, $p\in \partial \Omega$ and $F:\Omega\rightarrow \Omega$ be as in Theorem \ref{T:main}, with $F(z)\not\equiv z$.

First, suppose that $\{F^m\}$ is compactly divergent. Then we have $k\le 2$ by Huang's localized Burns-Krantz rigidity (\cite[Theorem 2.5]{H:CJM95}). Moreover, by Lemma \ref{L:auto}, $F$ can not be an automorphism unless $k=1$ and $\Omega$ is biholomorphic to the unit ball.

Next, suppose that $\{F^m\}$ is not compactly divergent. Then, by Theorem \ref{T:taut}, there is a holomorphic retract $M$ of $\Omega$. Set $l=\textup{dim}_\cv M$. By Lemma \ref{L:V}, $l>0$ and $p\in \partial M$. And by Lemma \ref{L:auto}, $l<n$. Thus, we can assume that $1\le l\le n-1$.

\begin{lem}\label{L:M}
For $1\le l\le n-1$, set $f=F|_M$. Then $f(z)\equiv z$.
\end{lem}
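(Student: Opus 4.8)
The plan is to show that the automorphism $f=F|_M$ of the retract $M$ is the identity by transporting the boundary rigidity of $F$ onto the complex geodesics of $M$ landing at $p$. Two facts I would record first. First, since $\rho$ is a limit of a subsequence $F^{m_j}$ and $\rho|_M=\mathrm{id}_M$, we get $f^{m_j}=F^{m_j}|_M\to \mathrm{id}_M$; thus the iterates of $f$ are not compactly divergent and $\mathrm{id}_M$ lies in the closure of $\{f^m\}$. Second, since $k\ge 1$ forces $F(z)=z+o(|z-p|)$, the non-tangential differential of $F$ at $p$ is $dF_p=I$, so by Lemma \ref{L:jets} one has $dF_{z}\to I$ whenever $z\to p$ non-tangentially in $\Omega$.

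A key structural remark is that, because $M$ is a holomorphic retract, both the inclusion $M\hookrightarrow\Omega$ and the retraction $\rho\colon\Omega\to M$ are distance-decreasing, so $k_M=k_\Omega|_M$. Consequently the complex geodesics of $M$ are precisely the complex geodesics of $\Omega$ whose image lies in $M$, and any such disk is simultaneously an $M$-geodesic and an $\Omega$-geodesic. In particular $f\in\aut(M)$, being a Kobayashi isometry of $M$, carries complex geodesics of $\Omega$ contained in $M$ to complex geodesics of $\Omega$ contained in $M$.

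For $l\ge 2$ I would fix a neighborhood $U$ of $p$ and use Proposition \ref{P:cg} to produce a complex geodesic $\varphi$ of $\Omega$ with $\varphi(\Delta)\subset U\cap M$, $\varphi(1)=p$, and $\varphi\in C^1(\overline{\Delta})$. Then $f\circ\varphi$ is again such a geodesic, with $(f\circ\varphi)(1)=f(p)=p$. Since $p$ is strongly pseudoconvex, $\varphi$ lands non-tangentially, i.e. $\varphi(\zeta)\to p$ non-tangentially as $\zeta\to1$; combining the $C^1$-regularity of $\varphi$ with the second fact above and Lemma \ref{L:jets} yields $(f\circ\varphi)'(1)=\lim_{\zeta\to1}dF_{\varphi(\zeta)}\bigl(\varphi'(\zeta)\bigr)=\varphi'(1)$. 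Thus $f\circ\varphi$ and $\varphi$ are two complex geodesics of $\Omega$ sharing the boundary point $p$ and the boundary derivative $\varphi'(1)$; by uniqueness of complex geodesics at a strongly pseudoconvex point this forces $f\circ\varphi=\varphi$, so $f$ fixes $\varphi(\Delta)$ pointwise. Letting the direction of $\varphi$ vary, the geodesics through $p$ sweep out a neighborhood of $p$ in $M$, whence $f=\mathrm{id}$ on an open subset of $M$ and therefore $f\equiv\mathrm{id}$ on $M$ by the identity principle. The case $l=1$ is the same comparison with $M$ itself in the role of $\varphi(\Delta)$: a one-dimensional retract is the image of a complex geodesic, and the identical derivative-plus-uniqueness argument gives $f=\mathrm{id}$. (Alternatively, in that case $f$ would be a disk automorphism fixing the boundary point $p$, hence parabolic or hyperbolic and compactly divergent, contradicting $f^{m_j}\to\mathrm{id}_M$.)

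The step I expect to be the main obstacle is the derivative-transfer-plus-uniqueness argument: one must verify that $\varphi$ approaches $p$ non-tangentially so that Lemma \ref{L:jets} genuinely applies to $F$ along $\varphi$, pass this differential information from the ambient map $F$ to $f=F|_M$ on the tangent spaces of $M$, and invoke a clean uniqueness statement for complex geodesics with prescribed boundary point and boundary derivative at the strongly pseudoconvex point $p$. The fact that $\mathrm{id}_M$ is a limit of the iterates of $f$ is what excludes the parabolic and hyperbolic alternatives, and serves as the natural safeguard should the available uniqueness determine the geodesic only up to a Möbius reparametrization fixing $1$.
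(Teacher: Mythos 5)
Your overall strategy (transport the boundary $1$-jet of $F$ onto geodesic disks landing at $p$ and invoke uniqueness of geodesics) is not the paper's, and it has genuine gaps at exactly the points where the paper instead cites or proves something. For $2\le l\le n-1$ the paper simply quotes \cite[Lemma 5]{H:SNS94}, observing that simple connectivity is not used in its proof. Your reconstruction leaves the two hardest steps unjustified. First, the ``clean uniqueness statement for complex geodesics with prescribed boundary point and boundary derivative at a strongly pseudoconvex point'' is not among the tools quoted in the paper and is essentially the content of Huang's preservation principle in \cite{H:IJM94}; moreover, even granted, it would only give $f\circ\varphi=\varphi\circ\gamma$ for a M\"obius $\gamma$ fixing $1$ with $\gamma'(1)=1$ (you do flag this; the parabolic case can indeed be excluded by $f^{m_j}\to\mathrm{id}_M$, but that patch has to be carried out). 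Second, and more seriously, the assertion that ``the geodesics through $p$ sweep out a neighborhood of $p$ in $M$'' is indispensable --- for $l\ge 2$ the identity principle does not upgrade $f=\mathrm{id}$ on a single one-dimensional disk to $f\equiv\mathrm{id}$ on $M$ --- yet Proposition \ref{P:cg} only produces one geodesic in each neighborhood of $p$. That filling-up statement is precisely the ``non-degeneracy property of extremal mappings'' which is the main theorem of \cite{H:SNS94}; you are assuming the result you would need to prove.

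The $l=1$ case is where the paper's actual novelty lies, and your treatment misses it. You assert that a one-dimensional retract is the image of a complex geodesic, but the image of a complex geodesic is biholomorphic to $\Delta$, whereas $M$ may be a punctured disk or an annulus once $\Omega$ is not assumed simply connected --- this is exactly the new situation the paper must handle (hence the appeal to Milnor's classification \cite[Theorem 5.2]{M}, which explicitly lists these conformal types). Your fallback argument (``$f$ is a disk automorphism fixing the boundary point $p$, hence hyperbolic or parabolic'') again presumes $M\cong\Delta$ and, in addition, that the uniformization extends to $p$ so that ``$f(p)=p$'' can be read off on $\partial\Delta$; that boundary regularity is the content of Lemma \ref{L:phi}, not something free. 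The paper's route is different: it uses the fact from \cite{FR} that $M$ is, near $p$, a Lipschitz graph containing the complex normal direction, so that $z\to p$ within $M$ is a non-tangential approach and Lemma \ref{L:jets} gives $dF_z\to\mathrm{id}$ along $M$; combined with Milnor's classification this forces $f\equiv\mathrm{id}$ without ever needing $M$ to be a geodesic disk.
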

\begin{proof}
When $l=1$, by the classification theorem \cite[Theorem 5.2]{M}, either $f$ is an irrational rotation on $M$ (with $M$ isomorphic either to a disk, to a punctured disk or to an annulus) or $f$ is of finite order, i.e. there exists $m\ge 1$ such that $f^m\equiv id$, and every point of $M$ is periodic. From the work of \cite{FR} (cf. \cite[Theorem 1]{H:IJM94}), we know that $M$, near $p$, is a Lipschitz graph over an open set in the complex plane through $p$ containing the complex normal direction to $\partial \Omega$. Thus, $dF_z\rightarrow id$ as $z\rightarrow p$ in $M$ by Lemma \ref{L:jets}, since $F(z)=z+o(|z-p|)$ as $z\rightarrow p$. Hence, we must have $f(z)\equiv z$.

When $2\le l\le n-1$, the lemma is essentially \cite[Lemma 5]{H:SNS94}. (The domain is assumed to be simply-connected in \cite[Lemma 5]{H:SNS94}, although this assumption is not needed in its proof.)
\end{proof}

\begin{lem}\label{L:phi}
When $l=1$, let $\varphi:\Delta\rightarrow M\subset \Omega$ be either the Riemann mapping (if $M$ is simply-connected) or the universal covering map (if $M$ is not simply-connected), with $\varphi(\tau_j)\rightarrow p$ for a sequence $\tau_j\rightarrow 1$. When $2\le l\le n-1$, let $\varphi:\Delta\rightarrow \Omega$ be a complex geodesic of $\Omega$ contained in $M$ with $\varphi(\tau_j)\rightarrow p$ for a sequence $\tau_j\rightarrow 1$. Then, $\varphi$ is Lipschitz continuous near $1$, with $\varphi(1)=p$.
\end{lem}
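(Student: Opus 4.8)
The plan is to handle the two ranges of $l$ separately, in each case reducing the regularity of $\varphi$ to its behaviour in the complex normal direction to $\partial\Omega$ at $p$. For $2\le l\le n-1$ there is little to do: Proposition \ref{P:cg}, applied with a small neighborhood $U$ of $p$, already produces a complex geodesic $\varphi$ of $\Omega$ with $\varphi(\Delta)\subset U\cap M$, $\varphi(1)=p$ and $\varphi\in C^1(\overline{\Delta})$, and $C^1$-regularity up to the boundary is in particular Lipschitz continuity near $1$. So I would take this $\varphi$ and concentrate on the genuinely new case $l=1$.

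For $l=1$ I would first record two structural facts. Since $M$ is a holomorphic retract it is isometrically embedded, $\kappa_M=\kappa_\Omega|_{TM}$, and as the Riemann map (resp.\ the universal covering map) $\varphi$ is an infinitesimal isometry onto $M$, this gives $\kappa_\Omega(\varphi(\zeta);\varphi'(\zeta))=\kappa_\Delta(\zeta;1)=(1-|\zeta|^2)^{-1}$. Secondly, by the result of \cite{FR} quoted in the proof of Lemma \ref{L:M}, near $p$ the retract $M$ is a Lipschitz graph over an open subset of the complex normal line; in coordinates with $p=0$ and this line the $z_1$-axis I would write $\varphi=(\psi,h\circ\psi)$ with $\psi$ the normal component, $h$ Lipschitz, and $\delta(\varphi(\zeta))\asymp|\operatorname{Re}\psi(\zeta)|$. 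The Lipschitz bound $|(h\circ\psi)'|\le L|\psi'|$ gives $|\varphi'|\asymp|\psi'|=|\varphi'_N|$, so it suffices to bound the normal derivative near $1$. Combining the isometry identity with the standard lower estimate $\kappa_\Omega(z;v)\gtrsim |v_N|/\delta(z)$ for the Kobayashi metric near a strongly pseudoconvex point (Graham, Forstneri\v{c}--Rosay) yields $|\psi'(\zeta)|\lesssim \delta(\varphi(\zeta))/(1-|\zeta|)$, and the whole lemma is reduced to the single estimate
\[
\delta(\varphi(\zeta))\lesssim 1-|\zeta|\qquad(\zeta\to1),
\]
that is, to $\varphi$ landing at $p$ non-tangentially.

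This last estimate is where the global Kobayashi distance, rather than the infinitesimal metric, must be used; the infinitesimal inequalities are scale invariant and cannot by themselves detect the rate of approach. For the Riemann map $\varphi$ is a genuine isometry, so $k_\Omega(\varphi(0),\varphi(\zeta))=k_\Delta(0,\zeta)\ge \tfrac12\log\frac{1}{1-|\zeta|}$, while the boundary asymptotics of the Kobayashi distance near a strongly pseudoconvex point (Balogh--Bonk), $k_\Omega(z_0,w)=\tfrac12\log\frac{1}{\delta(w)}+O(1)$, give the matching upper bound; together they force $\delta(\varphi(\zeta))\asymp 1-|\zeta|$. Feeding this back yields $|\varphi'|\lesssim1$, so $\varphi$ is Lipschitz near $1$; since then $\varphi$ extends continuously to $1$ and $\delta(\varphi(\zeta))\to0$, the hypothesis $\varphi(\tau_j)\to p$ pins the boundary value $\varphi(1)=p$.

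The main obstacle is the covering-map case, where $\varphi$ is only an infinitesimal isometry and the clean identity $k_\Omega(\varphi(0),\varphi(\zeta))=k_\Delta(0,\zeta)$ is no longer available. The observation I would exploit is that the graph description forces the end of $M$ at $p$ to be simply connected (a half-plane type end), so for $\zeta$ near $1$ the covering map $\varphi$ is injective onto this end; after a conformal identification of the end with a disk the radial approach to $p$ becomes distance minimizing, which restores the lower bound $k_\Omega(\varphi(\zeta_0),\varphi(\zeta))\ge \tfrac12\log\frac{1}{1-|\zeta|}-C$ and reduces the covering case to the simply connected one. Justifying this reduction rigorously --- that the distance to $p$ is realized inside the simply connected end rather than by going around the handles of $M$ --- is the delicate point I expect to demand the most care.
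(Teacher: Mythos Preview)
Your handling of the case $2\le l\le n-1$ matches the paper's exactly: both simply invoke Proposition~\ref{P:cg}.

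For $l=1$ your route diverges substantially from the paper's, and the divergence is precisely at the point you yourself flag as the ``main obstacle.'' The paper does not attempt any global Kobayashi-distance argument at all. It observes that the simply-connected case is literally \cite[Lemma~8]{FR}, and then makes the single remark that the proof of that lemma uses only the \emph{local inverse} of $\varphi$ near $1$, not the global biholomorphism. Since a universal covering map is a local biholomorphism, it too admits a local inverse on a neighborhood of any preimage of the end at $p$, and the same proof goes through verbatim. That one sentence disposes of the non-simply-connected case.

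By contrast, your plan for the covering case hinges on recovering the lower bound $k_\Omega(\varphi(\zeta_0),\varphi(\zeta))\ge\tfrac12\log\tfrac{1}{1-|\zeta|}-C$ by arguing that Kobayashi geodesics in $M$ from $\varphi(\zeta_0)$ to $\varphi(\zeta)$ stay in the simply-connected end near $p$. This is not automatic: even granting that the end is simply connected and that $\varphi$ is injective over it, you must still show that for every nontrivial deck transformation $\gamma$ the point $\gamma\zeta$ is hyperbolically no closer to $\zeta_0$ than $\zeta$ is (up to a bounded error), which requires control on where the other sheets accumulate on $\partial\Delta$ and on the displacement of the deck group. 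For a punctured disk or an annulus one can presumably push this through by direct analysis of the cyclic deck group, but that is genuine additional work which you have not supplied, and your proposal explicitly leaves it open. The paper's local-inverse observation sidesteps the whole issue; you should look at how \cite[Lemma~8]{FR} is actually proved and check that nothing beyond local invertibility of $\varphi$ is used.
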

\begin{proof}
When $l=1$ and $M$ is simply-connected, the lemma is just \cite[Lemma 8]{FR}.

When $l=1$ and $M$ is not simply-connected, the proof is essentially the same as that of \cite[Lemma 8]{FR}. Indeed, although \cite[Lemma 8]{FR} deals with the biholomorphic Riemann mapping $\varphi$, its proof only needs the local inverse of $\varphi$, which is also valid for the universal covering map.

When $2\le l\le n-1$, the lemma follows from Proposition \ref{P:cg}.
\end{proof}

By Lemmas \ref{L:M} and \ref{L:phi}, Theorem \ref{T:Huang} can be applied to $F$ and shows that $k=1$ since $F(z)\not\equiv z$. This completes the proof of Theorem \ref{T:main}.

\begin{rmk}
In both Theorem \ref{T:main} and Corollary \ref{C:main}, we only need to assume the boundary behavior of $F(z)$ as $z\rightarrow p$ non-tangentially. For a more detailed discussion in this direction, see \cite{R:Non}.
\end{rmk}

\end{document}